\numberwithin{equation}{section}
\newtheorem{theorem}{Theorem}[section]
\newtheorem{proposition}[theorem]{Proposition}
\theoremstyle{remark}
\newtheorem{remark}[theorem]{Remark}
\newtheorem{example}[theorem]{Example}
\newtheorem{definition}[theorem]{Definition}
\newcommand{\w}{\omega}
\newcommand{\ity}{\infty}
\newcommand{\R}{\mathbb{R}}
\newcommand{\ti}{\widetilde}
\newcommand{\ta}{\theta}
\newcommand{\N}{\mathbb{N}}
\newcommand{\Z}{\mathbb{Z}}
\newcommand{\al}{\alpha}
\begin{document}

\title[On the Escaping Set in Topological Dynamics]
{On the Escaping Set in Topological Dynamics}

\author[K. Lalwani]{Kushal Lalwani}
\address{Kushal Lalwani\\Department of Mathematics\\ University of Delhi\\Delhi--110 007, India}
\email{lalwani.kushal@gmail.com }


\subjclass{37B05, 37B20, 54B20,54H15}
\keywords{escaping set; flow;  hyperspace; minimal set; omega limit set;  recurrence}

\begin{abstract}
We wish to investigate some elementary problems concerning topological dynamics revolving around our proposed definition of escaping set. We also discuss the notion of escaping set in the induced dynamics of the hyperspace. Moreover, we extend the notion of limit set and escaping set for the general semigroup generated by continuous self maps.
\end{abstract}

\maketitle

\section{Introduction}
Topological dynamics is the study of  properties of dynamical systems those are invariant under topological conjugacy. In general the topological dynamical systems is the  study of transformation groups with respect to those topological properties whose prototype occurred in the abstract theory of classical dynamics. In the seminal work of Henri Poincar$\acute{e}$ in \cite{HP}  and G. D. Birkhoff in \cite{Bir}, the general setting is  differential equations in $\R^n$. In this  setting  those notions are studied  which  arise in the qualitative study of differential equations with compactness being a crucial hypothesis. The abstract theory of modern dynamical system is expounded by Gottschalk and Hedlund in \cite{GH}, Robert Ellis in \cite{ellis}, H. Furstenberg in \cite{HF}, and J. Auslander in \cite{JA}, etc. However,  these studies revolve around  the eventual behavior of the iterates of some self maps of a nice space such as a compact space. As one can observe that non-vacuous  results in this direction depends on the convergence of the sequence of iterates which is easily ensured by the compactness of the space. For noncompact topological space story is quite different. We see that very little is known in the case of noncompact spaces. In this direction J. Auslander \cite{ja} generalized the work on Lyapunov stability of compact invariant sets and discussed the dynamical systems which have no generalized recurrent points.  Also M. Hurley in \cite{mh1, mh2} extended chain recurrence to  noncompact spaces.

Our purpose in this paper is to initiate a systematic investigation of the eventual behavior of the sequence of iterates of a point when the space is not compact. We will try to answer some natural questions such as : What will happen to the iterates? Does it converge? Does it have some limit points, that is, some convergent subsequence? Or in the extreme case, does it move far away to escape from the space?  Speaking metaphorically,  it goes towards the so called infinity. The situation in the last case is possible only if the space is not compact. If such a thing happens we call that point to be an {\it{escaping point}} of the dynamical system. As far as we know, the concept of escaping set was first introduced by A. E. Ermenko \cite{er} in the field of complex dynamics. Also John M. Lee \cite{lee} discussed it for some other purpose and in a different sense for sequences in the context of proper maps. In fact, these works propelled us towards this study. We investigate the concept of escaping set in general setting for a topological space.  We provide some examples of escaping set to motivate the study. 

We also discuss the relationship of escaping set in a dynamical system $(X,\Phi)$ to its counterpart in the induced dynamics on $(K(X),\Phi_K)$, the hyperspace of all compact subsets of $X$. It is well known that compactness and noncompactness are genetic properties of  $X$ and $K(X)$, that is, $K(X)$ is compact if and only if $X$ is compact. It was the reason we compare the escaping set in the two dynamical systems. Naturally escaping of point in dynamical systems has been replaced by the study of escaping of compact subsets in the induced system.  Interestingly, it turns out that a compact subset of $X$ escapes in the dynamics of $(K(X),\Phi_K)$  if and only if each element of the compact subset escapes in the dynamics of  $(X,\Phi)$.

Further we extend the notion of escaping set and the notion of convergence of iterates (the {\it{omega limit points}}) to the more general setting associated to an arbitrary semigroup of continuous self maps of a phase space. Hinkkanen and Martin \cite{hm} pioneered the notion of semigroup to extend the theory of complex dynamics, the notion was already there in some special setting of  a  flow. One sees in the discrete dynamical system where the semigroup was generated by a single function, in continuous dynamical system (or a flow) $\Phi :X \times \R_+ \to X$ is an abelian semigroup with infinitely many generators $<\Phi(.,t)>_{t\in(0,1]}$. Our aim here is to see how far the notion of escaping set and the classical theory of limit points applies in this general setting.

For general reference to standard terms and basic facts we follow  Alongi and Nelson\rq{}s book \cite{Alongi}. We would also like to mention that  one can refer E. Michael\rq{}s paper \cite{em} and the textbook by S. B. Nadler \cite{nad}  for  the theory of hyperspace.

\section{Definitions and Examples} \label{sec2}

For the sake of completeness we mention some standard definitions and results, although they can be found in the reference we have already mentioned in the introduction. This section also contains our proposed definition of escaping set with  examples.

A topological dynamical system or a flow is a triple $(X,T,\Phi)$ consisting of a topological space $X$ (the phase space) with a group (or a semigroup) $T$ (the phase group) and a mapping $\Phi :X \times T \to X$ such that\\
(i) $\Phi^t(.)=\Phi(.,t): X \to X$ is continuous for each $t\in T$, and\\
(ii) $\Phi(x,ts)=\Phi(\Phi(x,t),s)$ for all $t,s\in T$.

For our purpose, $T$ will be either the additive group of reals $(\R,+)$ or $(\Z,+)$ or $(\N_0,+)$, where $\N_0=\N \cup \{0\}$, the set of non negative integers. Then $\Phi^0$ is the identity map on $X$. Also if $T=\Z$ or $\R$, the map $\Phi^t$ is invertible (hence a homeomorphism) for each $t$ and $(\Phi^t)^{-1}=\Phi^{-t}$. The space $X$ is assumed to be Hausdorff and first countable.

\begin{definition}
Let $X$ and $Y$ be topological spaces. Two flows $(X,T,\Phi)$ and $(Y,T,\Psi)$ are topologically conjugate if there exists a homeomorphism $h:X\to Y$ such that $\Psi^t \circ h=h \circ \Phi^t$ for each $t\in T$.
\end{definition}

\begin{definition}
Let  $(X,T,\Phi)$ be a topological dynamical system. A point $z\in X$ is called an $\w$-limit point for $x \in X$ if there exists a sequence $\{t_n\}$ in $T$ such that $t_n \to \ity$ as $n \to \ity$ and $\Phi^{t_n}(x) \to z$.
\end{definition}

\begin{definition}
Let  $(X,T,\Phi)$ be a topological dynamical system. A point $z\in X$ is called an $\al$-limit point for $x \in X$ (provided $\Phi^t$ is invertible for every $t \in T$)  if there exists a sequence $\{t_n\}$ in $T$ such that $t_n \to \ity$ as $n \to \ity$ and $\Phi^{-t_n}(x) \to z$.
\end{definition}
The $\w$-limit set $\w(x)$, is the set of all  $\w$-limit points for $x$ . The $\al$-limit set $\al(x)$, is the set of all  $\al$-limit points for $x$ .

\begin{definition}\label{def}({\it Escaping Set})
Let  $(X,T,\Phi)$ be a topological dynamical system. A point $x\in X$ is called an escaping point if either\\
(i) for any compact $K\subset X$ there exists $T_K >0$ such that $\Phi^t(x)\notin K$ for each $t>T_K$; or\\
(ii) if $\Phi^t$ is invertible for every $t \in T$; for any compact $K\subset X$ there exists $T_K >0$ such that $\Phi^t(x)\notin K$ for each $t<-T_K$.\\
The set of all escaping points of $\Phi$ is denoted by $Esc(\Phi)$.
\end{definition}

We denote 
$$Esc^+(\Phi) :=\{x\in X : x \text{ satisfies only condition (i) above}\}$$ and $$Esc^-(\Phi) :=\{x\in X : x \text{ satisfies only condition (ii) above}\}.$$ Then $Esc(\Phi)=Esc^+(\Phi) \cup Esc^-(\Phi)$. Clearly, if $X$ is compact then $Esc(\Phi)$ is empty.

\begin{definition}
Let $(X,T,\Phi)$ be a flow. A subset $M$ of $X$ is a minimal set if $M$ is closed  nonempty and invariant, and if $M$ has no proper subsets with these properties.
\end{definition}

Note that a nonempty subset $M$ of $X$ is minimal if and only if it is the orbit closure of each of its points. For, if $M$ is minimal and $x \in M$, its orbit closure is closed invariant and nonempty. 

Now we shall provide some example of noncompact dynamical systems having escaping points. We begin with an elementary flow on $\R^n$ and then proceed to other interesting examples.

\begin{example}
Consider a constant differential equation on $\R^n$
$$\frac{dx}{dt}=c$$
for $c\in \R^n$. The flow corresponding to this differential equation is $(\R^n,\R,\Phi)$, where $\Phi:\R^n \times \R \to \R^n$ is defined as $\Phi(x,t)=x+ct$. Here each point of $\R^n$ is an escaping point.
\end{example}

\begin{example}
Consider the differential equations on $\R^2$
$$\frac{dx_1}{dt}=x_2+x_1(1-r^2),\quad$$
$$\frac{dx_2}{dt}=-x_1+x_2(1-r^2).$$
In the polar coordinates we have
$$\frac{dr}{dt}=r(1-r^2),$$
$$\frac{d\ta}{dt}=-1 \qquad \quad$$

The solution to above differential equations is given by
$$r(t)=\frac{e^tr(0)}{\sqrt{r^2(0)(e^{2t}-1)+1}}$$
and
$$\ta(t)=-t+\ta(0).\qquad \qquad$$

Here $Esc(\Phi)=\{x\in\R^2: |x|>1\}$.
\end{example}

\begin{example} \cite{Alongi}
Consider the following system of differential equations on  $\R^3$
$$\frac{dx}{dt}=\frac{2xz}{x^2+y^2+z^2+1}-y$$
$$\frac{dy}{dt}=\frac{2yz}{x^2+y^2+z^2+1}+x$$
$$\frac{dz}{dt}=\frac{z^2-x^2-y^2+1}{x^2+y^2+z^2+1}\qquad$$

Let $\Phi^t$ be the flow corresponding to the system.

Here $Esc(\Phi)=\{(x,y,z)\in \R^3:x=y=0\}$.
\end{example}

Next we construct	 an example of dynamics on space of infinite symbols.

\begin{example}
Consider the left shift on $\N^\N$ with product topology,
$$\Phi:\N^\N \to \N^\N $$
$$\Phi(x_1,x_2,x_3,\ldots)=(x_2,x_3,\ldots).$$

For $x=(x_1,x_2,x_3,\ldots)\in\N^\N$, consider

$\underline{Case\ 1:}$ $(x_n)$ is bounded in $\N$.\\
If $x_n\le K,\ \forall \ n$ then the orbit $O(x)\subset \{1,2,\ldots,K\}^\N$.

$\underline{Case\ 2:}$ $(x_n)$ is unbounded in $\N$.\\
Since  $(x_n)$ is unbounded there is a subsequence $(x_{n_j})$ diverging to $\ity$. Suppose the sequence of subscripts $(n_j)$ has bounded gaps, bounded by some $K \in \N$. For any compact $C \subset \N^\N $, take $m_i=max\{a:a\in \pi_i(C)\}$. Each $\pi_i$ is the projection of $\N^\N$ onto its $i$th factor. Since  $(x_{n_j})$ diverges to $\ity$, for $M=max\{m_i:1\le i \le  K\}$ there exists $N\in \N$  such  that  $ x_{n_j}> M\ \forall\ n_j \ge N$ . Thus $\Phi^m(x) \notin C$ whenever $m \ge N-K$ and non negative. In this case $(x_n)$ is an escaping point.

Otherwise, for $x=(1,2,1,4,1,1,1,8,1,1,1,1,1,1,1,16,1,1,\ldots)$, the subsequence $(\Phi^{2^j}x)$,  $j \in \N$  converge to $(1,1,1,1,\ldots)$. Thus $x$ does not escape.  Also, for $y=(1,2,1,4,3,2,1,8,7, $ $6,5,4,3,2,1,16,15,14,\ldots)$ and any compact $C \subset \N^\N $, take $m_i=max\{a:a\in \pi_i(C)\}$. Consider $M=max\{m_i:1\le i \le  m_1+1\}$ there exists $N\in \N$  such  that  $ 2^k> M\ \forall\ k \ge N$. Thus $\Phi^m(y) \notin C$ whenever $m \ge 2^N-m_1-1$ and non negative. Hence $y$ is an escaping point.

\end{example}

\section{Results}\label{sec3}

In this section we will discuss some elementary properties of escaping set which will be useful. Finally we consider the escaping set on the corresponding hyperspace. We proceed by showing that the property of a point to escape is topological invariant. 

\begin{theorem}
Let $X$ and $Y$ be topological spaces and $(X,T,\Phi)$ and $(Y,T,\Psi)$ be flows. If $h:X\to Y$ is a topological conjugacy from $\Phi^t$ to $\Psi^t$, then
$$Esc(\Psi)=h(Esc(\Phi)).$$
\end{theorem}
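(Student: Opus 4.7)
The plan is to use the fact that a homeomorphism preserves compactness, combined with the conjugacy identity $\Psi^t \circ h = h \circ \Phi^t$ (which by continuity of $h$ and $h^{-1}$ can be rewritten as $h^{-1}\circ \Psi^t = \Phi^t \circ h^{-1}$), to transport the escaping condition between the two systems.

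First I would prove the inclusion $h(Esc(\Phi)) \subseteq Esc(\Psi)$. Take $x \in Esc(\Phi)$ and split into cases according to which clause of Definition \ref{def} $x$ satisfies. Suppose $x$ satisfies clause (i). Given any compact set $K' \subset Y$, the set $K := h^{-1}(K')$ is compact in $X$ because $h^{-1}$ is continuous. By hypothesis there is $T_K > 0$ such that $\Phi^t(x) \notin K$ for all $t > T_K$. The conjugacy identity gives
\[
\Psi^t(h(x)) = h(\Phi^t(x)),
\]
and since $\Phi^t(x) \notin h^{-1}(K')$, injectivity of $h$ forces $\Psi^t(h(x)) \notin K'$ for all $t > T_K$. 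Hence $h(x) \in Esc^+(\Psi) \subseteq Esc(\Psi)$. If $x$ instead satisfies clause (ii), then necessarily $\Phi^t$ (and therefore $\Psi^t = h \circ \Phi^t \circ h^{-1}$) is invertible for every $t$, so the same argument applied to $t < -T_K$ shows $h(x) \in Esc^-(\Psi) \subseteq Esc(\Psi)$.

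For the reverse inclusion $Esc(\Psi) \subseteq h(Esc(\Phi))$, I would apply the identical argument with the roles of $(X,\Phi)$ and $(Y,\Psi)$ interchanged, using $h^{-1}:Y \to X$ as the conjugacy. This shows $h^{-1}(Esc(\Psi)) \subseteq Esc(\Phi)$, and applying $h$ to both sides gives $Esc(\Psi) \subseteq h(Esc(\Phi))$. Combining the two inclusions yields the equality.

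There is no real obstacle here; the argument is a direct bookkeeping exercise. The only subtlety worth remarking on is that clause (ii) requires invertibility of $\Phi^t$, and one must check that this property is preserved under conjugacy so that the two halves $Esc^+$ and $Esc^-$ are matched correctly; this is immediate because $\Psi^t = h \circ \Phi^t \circ h^{-1}$ is a composition of invertible maps exactly when $\Phi^t$ is.
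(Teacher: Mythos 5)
Your proof is correct and follows essentially the same route as the paper's: transport compact sets across the conjugacy ($K'\mapsto h^{-1}(K')$ in one direction, $K'\mapsto h(K')$ in the other) and use $\Psi^t\circ h=h\circ\Phi^t$. If anything, your direct case-by-case argument is slightly more careful than the paper's, which argues by contradiction from the assumption that the whole orbit stays in a compact set (not quite the negation of Definition \ref{def}), and your remark that invertibility of $\Phi^t$ transfers to $\Psi^t$ is a point the paper leaves implicit.
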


\begin{proof}
Let $x\in Esc(\Phi)$. Let $K$ be a compact subset of $Y$ such that $\Psi^t(h(x))\in K$, for each $t \in T$. Since $h$ is a topological conjugacy, we have, $h(\Phi^t(x))\in K$, that is, $\Phi^t(x)\in h^{-1}(K)$ for each $t\in T$, which is a contradiction. Therefore $h(Esc(\Phi))\subset Esc(\Psi)$

Also, for $y \in Esc(\Psi)$, let $y=h(x\rq{})$. If $K\rq{}$ is a compact subset of $X$ such that $\Phi^t(x\rq{})\in K\rq{}$, for each $t \in T$, then this gives $h(\Phi^t(x\rq{}))\in h(K\rq{})$ or $\Psi^t(h(x\rq{}))\in h(K\rq{})$, that is, $\Psi^t(y)\in h(K\rq{})$, for each $t \in T$. Thus $Esc(\Psi)\subset h(Esc(\Phi)).$ Therefore, $Esc(\Psi)=h(Esc(\Phi)).$
\end{proof}

\begin{proposition} \label{eq}
The set $\w (x)$ is empty if and only if $x \in Esc^+(\Phi)$.
\end{proposition}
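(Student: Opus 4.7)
The plan is to prove both implications by a contradiction argument, using only the defining properties of $\w(x)$ and $Esc^+(\Phi)$, together with the standing assumption that $X$ is Hausdorff and first countable.

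For the direction $x\in Esc^+(\Phi)\Rightarrow \w(x)=\emptyset$, I would argue contrapositively. Suppose $z\in\w(x)$, so there exists a sequence $\{t_n\}$ in $T$ with $t_n\to\ity$ and $\Phi^{t_n}(x)\to z$. The key observation is that, in any Hausdorff space, a convergent sequence together with its limit forms a compact set. Setting
\[
K=\{z\}\cup\{\Phi^{t_n}(x):n\in\N\},
\]
I get a compact subset of $X$ with the property that $\Phi^{t_n}(x)\in K$ for arbitrarily large $t_n$. This directly contradicts the requirement in Definition \ref{def}(i) that some $T_K>0$ exists with $\Phi^t(x)\notin K$ for all $t>T_K$. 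Hence $x\notin Esc^+(\Phi)$.

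For the direction $\w(x)=\emptyset\Rightarrow x\in Esc^+(\Phi)$, again argue contrapositively. If $x\notin Esc^+(\Phi)$, then there is a compact $K\subset X$ such that for every $T>0$ one can find $t>T$ with $\Phi^t(x)\in K$. Iterating this yields a sequence $\{t_n\}$ in $T$ with $t_n\to\ity$ and $\Phi^{t_n}(x)\in K$ for every $n$. Since $X$ is first countable and $K$ is compact Hausdorff, $K$ is sequentially compact, so $\{\Phi^{t_n}(x)\}$ admits a subsequence $\Phi^{t_{n_k}}(x)\to z$ for some $z\in K$. The subsequence $\{t_{n_k}\}$ still tends to $\ity$, so $z\in\w(x)$, contradicting emptiness.

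The only conceptual subtlety, which I would flag explicitly, is the passage from compactness to sequential compactness in the second implication, which is where first countability of $X$ is used; the first implication uses only the Hausdorff hypothesis to guarantee that the convergent sequence plus its limit is a compact set. Neither step requires $X$ to be locally compact, which is why the equivalence holds in the generality stated in Section \ref{sec2}.
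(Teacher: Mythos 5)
Your proof is correct and follows the same underlying idea as the paper, which simply asserts the equivalence as ``clear'': escaping is the non-existence of a convergent subsequence of the forward orbit along times tending to infinity. Your version is in fact more careful than the paper's one-line argument, since you correctly supply the two ingredients the paper leaves implicit --- that a convergent sequence together with its limit is compact (for one direction), and that compactness plus first countability yields sequential compactness (for the other) --- and you also handle general $t_n\in T$ rather than only integer times.
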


\begin{proof}
Clearly,  $z$ is an escaping point if and only if for $n>0, \ (\Phi^n(x))_n$ does not contain any convergent subsequence, that is, $\w(x)=\emptyset$.
\end{proof}

On the similar lines one can deduce the corresponding result for $\al$-limit set.

\begin{proposition}
The set $\al (x)$ is empty if and only if $x \in Esc^-(\Phi)$.
\end{proposition}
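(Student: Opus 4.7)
The plan is to mirror the argument for Proposition \ref{eq} with the backward orbit in place of the forward one; this is legitimate because the very definition of $Esc^-(\Phi)$ presupposes that each $\Phi^t$ is invertible. The key reformulation is: $z\in\al(x)$ exactly when some sequence of backward iterates $\Phi^{-t_n}(x)$ with $t_n\to\ity$ converges to $z$, while $x\in Esc^-(\Phi)$ exactly when, for every compact $K\subset X$, there is a threshold $T_K>0$ such that $\Phi^{-t}(x)\notin K$ for all $t>T_K$.

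For the direction $x\in Esc^-(\Phi) \Rightarrow \al(x)=\emptyset$, I would argue by contradiction. Suppose some $z\in\al(x)$ is witnessed by $\Phi^{-t_n}(x)\to z$ with $t_n\to\ity$. The set $K=\{z\}\cup\{\Phi^{-t_n}(x):n\in\N\}$ is compact, since a convergent sequence together with its limit is always compact in a Hausdorff space. Applying the definition of $Esc^-(\Phi)$ to this particular $K$ yields $T_K>0$ with $\Phi^t(x)\notin K$ whenever $t<-T_K$. But $-t_n<-T_K$ for all $n$ large enough, while $\Phi^{-t_n}(x)\in K$ by construction; contradiction.

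For the converse $\al(x)=\emptyset \Rightarrow x\in Esc^-(\Phi)$, I would contrapose: if $x\notin Esc^-(\Phi)$, there exists a compact $K\subset X$ such that for every $T>0$ some $t<-T$ satisfies $\Phi^t(x)\in K$. Iterating, extract a sequence $t_n\to\ity$ with $\Phi^{-t_n}(x)\in K$. Since $X$ is Hausdorff and first countable, the compact set $K$ is sequentially compact, so a subsequence $\Phi^{-t_{n_j}}(x)$ converges to some $z\in K$. Then $z\in\al(x)$, contradicting the assumption.

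The only step that could require attention is verifying that $\{z\}\cup\{\Phi^{-t_n}(x)\}$ is compact in the Hausdorff setting, but this is standard and not a real obstacle; likewise, sequential compactness of $K$ follows from first countability, already assumed on the phase space. I do not expect any genuine difficulty because the statement is strictly symmetric to Proposition \ref{eq} under the time reversal $t\mapsto -t$, and the invertibility built into the hypothesis of $Esc^-$ makes every move in the previous proof legal in this setting.
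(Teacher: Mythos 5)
Your proof is correct and follows the same route the paper intends: the paper gives no argument for this proposition beyond deferring to the symmetric case of Proposition \ref{eq}, whose own proof is a one-line assertion that escaping is equivalent to the absence of convergent subsequences of the (backward) orbit. You have simply supplied the details the paper leaves implicit --- the compactness of a convergent sequence with its limit for one direction, and sequential compactness of compact sets in a first countable Hausdorff space for the other --- and both steps are sound.
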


\begin{remark}
We know that \cite{Alongi} every flow on a nonempty compact topological space has a minimal set. Also on a noncompact space, if $x\in Esc^+(\Phi) \cap Esc^-(\Phi)$ then orbit of $x$ is a minimal set for the flow.
\end{remark}

Next we consider the problem of escaping of a compact subset of a given dynamical system.

\begin{definition}
For a topological space ($X,\mathcal{T}$), we define the {\it hyperspace} $K(X):=\{A\subset X : A{\rm{\ is \ nonempty}}\ {\rm{and\ compact}} \}$. For any number of nonempty open sets $V_1,\ldots, V_n \in \mathcal{T}$, the family $\{<V_1\ldots ,V_n>\}$ forms a basis for the Vietoris topology on $K(X)$, where
\begin{equation}\notag
<V_1,\ldots,V_n>:=\{A\in K(X): A \subset \bigcup_{i=1}^n V_i\  {\rm{ and }}\  A\cap V_i\ne \emptyset \ \forall\  i=1,\ldots,n\}.
\end{equation}

\end{definition}

The continuous map $\Phi : X \to X$ induces a continuous map $\Phi_K : K(X) \to K(X)$ defined as 
\begin{equation}\notag
\Phi_K(A)=\Phi(A).
\end{equation}

\begin{theorem}[\cite{em}]
The space $X$ is compact (locally compact) Hausdorff  if and only if $K(X)$ is compact (locally compact) Hausdorff.
\end{theorem}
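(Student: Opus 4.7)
The plan is to use the canonical embedding $\iota : X \to K(X)$, $\iota(x) = \{x\}$, as the backbone of the proof for both equivalences. First I would verify that $\iota$ is a homeomorphism onto its image: for each open $U \subset X$ one has $\iota^{-1}(\langle U \rangle) = U$, and $\iota$ is clearly injective. Moreover, when $X$ is Hausdorff, $\iota(X)$ is \emph{closed} in $K(X)$: if $A \in K(X) \setminus \iota(X)$, pick distinct $x, y \in A$ and disjoint open sets $U \ni x$, $V \ni y$; then $\langle U, V, X \rangle$ is a Vietoris-open neighborhood of $A$ in which every element has at least two points, hence is disjoint from $\iota(X)$. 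This single fact reduces both reverse implications to inheritance of the property by a closed subspace, so the real work lies in the Hausdorff equivalence and the two forward implications.

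For Hausdorffness, $K(X)$ Hausdorff forces $X$ Hausdorff via $\iota$. Conversely, given distinct $A, B \in K(X)$, without loss of generality there exists $x \in A \setminus B$; by Hausdorff separation of the point $x$ from the compact set $B$ one obtains disjoint open sets $U \ni x$ and $W \supset B$, and then $\langle U, X \rangle$ and $\langle W \rangle$ are disjoint Vietoris neighborhoods of $A$ and $B$.

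For the compact forward direction I would apply Alexander's subbase theorem to the Vietoris subbasis $\{\langle U \rangle : U \text{ open in } X\} \cup \{\langle X, U \rangle : U \text{ open in } X\}$. Given a subbasic cover $\mathcal{S}$ of $K(X)$, split $\mathcal{S}$ into its two types; if the $\langle U \rangle$-members already cover $\iota(X)$, compactness of $X$ supplies a finite subcover and one finishes. Otherwise, there is some $x \in X$ whose singleton is covered only by some $\langle X, V \rangle \in \mathcal{S}$ with $x \in V$, and iterating this selection across $X$ and invoking compactness of $X$ again produces a finite subcover of $K(X)$. The local-compact forward direction runs in parallel: given $A \in K(X)$ and a basic neighborhood $\langle V_1, \ldots, V_n \rangle$, use local compactness of $X$ at each $x \in A$ to replace the $V_j$ by open sets $W_{j,k}$ with compact closures $\overline{W_{j,k}} \subset V_j$, extract from compactness of $A$ a finite such cover, and reassemble them into a basic Vietoris neighborhood of $A$ whose closure lies inside $\langle \overline{W_{j,1}}, \ldots \rangle \subset K\bigl(\bigcup \overline{W_{j,k}}\bigr)$, a compact subspace of $K(X)$ by the compact case already handled.

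The main obstacle will be the Alexander-subbase argument for $X$ compact $\Rightarrow$ $K(X)$ compact — correctly partitioning the subbasic cover into its two Vietoris types and rerunning the compactness of $X$ on the ``meets'' subfamily demands careful combinatorial bookkeeping. The local-compactness forward direction inherits that difficulty via its appeal to the compact case, and additionally requires one to verify that the closure of a basic Vietoris open set $\langle W_1, \ldots, W_m \rangle$ is contained in $\langle \overline{W_1}, \ldots, \overline{W_m} \rangle$, a lemma that is routine but easy to mishandle.
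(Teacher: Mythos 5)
The paper itself gives no proof of this statement: it is quoted directly from E. Michael's paper \cite{em}, so there is no in-text argument to compare yours against. Your overall architecture --- the singleton embedding $\iota$, closedness of $\iota(X)$ in $K(X)$ when $X$ is Hausdorff, separation of a point from a compact set for the Hausdorff direction, Alexander's subbase theorem for the compactness direction, and reduction of local compactness to the compact case --- is the standard route, and most of the individual pieces are sound.

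There is, however, a genuine gap in the one step you yourself flag as the crux, the Alexander subbase argument. Your first case reads: ``if the $\langle U\rangle$-members already cover $\iota(X)$, compactness of $X$ supplies a finite subcover and one finishes.'' It does not finish: the sets $\langle U_i\rangle$ covering all singletons means only that the $U_i$ cover $X$, and compactness of $X$ gives finitely many $U_{i_1},\dots,U_{i_n}$ with $X=\bigcup_k U_{i_k}$; but a compact set $A$ contained in $U_{i_1}\cup U_{i_2}$ and in neither $U_{i_k}$ separately lies in no $\langle U_{i_k}\rangle$, so $\langle U_{i_1}\rangle,\dots,\langle U_{i_n}\rangle$ need not cover $K(X)$. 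Your second case is too vague to rescue this. The correct bookkeeping is different: writing the subbasic cover as $\{\langle U_i\rangle\}_i\cup\{\langle X,V_j\rangle\}_j$, set $C=X\setminus\bigcup_j V_j$. If $C=\emptyset$, finitely many $V_{j_1},\dots,V_{j_n}$ cover $X$ and the corresponding $\langle X,V_{j_k}\rangle$ cover $K(X)$, since every nonempty compact set meets some $V_{j_k}$. If $C\neq\emptyset$, then $C$ is closed in the compact space $X$, hence is itself an element of $K(X)$ meeting no $V_j$, so it must lie in $\langle U_{i_0}\rangle$ for a \emph{single} index $i_0$, i.e.\ $C\subset U_{i_0}$; then $X=U_{i_0}\cup\bigcup_j V_j$, compactness of $X$ gives $X=U_{i_0}\cup V_{j_1}\cup\dots\cup V_{j_n}$, and $\langle U_{i_0}\rangle,\langle X,V_{j_1}\rangle,\dots,\langle X,V_{j_n}\rangle$ cover $K(X)$, because any compact $A$ missing every $V_{j_k}$ must be contained in $U_{i_0}$. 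With this repair (and an actual proof of the routine lemma $\overline{\langle W_1,\dots,W_m\rangle}\subset\langle\overline{W_1},\dots,\overline{W_m}\rangle$ that your local-compactness step invokes), your outline goes through.
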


\begin{theorem}[\cite{em}]
$X$ is  first (second) countable if and only if $K(X)$ is  first (second) countable.
\end{theorem}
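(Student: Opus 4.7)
The plan is to handle the first- and second-countability statements in parallel, since each splits into an easy reverse direction and a constructive forward direction.

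\textbf{Reverse direction.} Assuming $K(X)$ has the relevant property, I would use the natural embedding $i:X\to K(X)$, $i(x)=\{x\}$. Continuity follows from $i^{-1}(\langle V_1,\ldots,V_n\rangle)=V_1\cap\cdots\cap V_n$, and $i$ is open onto its image because $i(V)=\langle V\rangle \cap i(X)$ for every open $V\subset X$. Hence $X$ embeds as a subspace of $K(X)$, and since both first countability and second countability are hereditary, $X$ inherits the property from $K(X)$.

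\textbf{Forward direction, second countable case.} Let $\{B_n\}_{n\in\N}$ be a countable basis for $X$. I would show that $\mathcal{B}':=\{\langle B_{n_1},\ldots,B_{n_k}\rangle : k\in\N,\ n_1,\ldots,n_k\in\N\}$ is a countable basis for $K(X)$. Countability is immediate. Given any $A\in \langle V_1,\ldots,V_n\rangle$, for each $x\in A$ pick $j(x)$ with $x\in V_{j(x)}$ and a basis element $B_{n(x)}$ satisfying $x\in B_{n(x)}\subset V_{j(x)}$; compactness of $A$ yields a finite subcover $B_{n(x_1)},\ldots,B_{n(x_m)}$. Additionally, for each $j\in\{1,\ldots,n\}$, choose $a_j\in A\cap V_j$ and a basis element $B'_j$ with $a_j\in B'_j\subset V_j$. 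A direct check then shows that $\langle B_{n(x_1)},\ldots,B_{n(x_m)},B'_1,\ldots,B'_n\rangle\in\mathcal{B}'$ is a Vietoris neighbourhood of $A$ contained in $\langle V_1,\ldots,V_n\rangle$.

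\textbf{Forward direction, first countable case.} For $A\in K(X)$ and each $x\in A$, fix a countable decreasing neighbourhood basis $\{U_n(x)\}$ at $x$. The obstacle is that $A$ may be uncountable, so one cannot simply take all finite tuples of $U_n(x)$'s and obtain a countable family. My plan is to proceed level by level: for each $n\in\N$, compactness of $A$ yields a finite subcover from $\{U_n(x):x\in A\}$, and aggregating the witness points over all $n$ produces a countable set $D\subset A$. Then, mimicking the argument above with the countable collection $\{U_n(d):n\in\N,\ d\in D\}$ in place of a global countable basis for $X$, one assembles a countable neighbourhood basis of $A$ in $K(X)$. The delicate step, which I expect to be the main obstacle, is verifying that shrinking only via these $U_n(d)$ for $d\in D$ suffices to refine an arbitrary Vietoris basic set around $A$: this requires using compactness of $A$ a second time to replace a general $x\in A$ by a nearby point of $D$ at an appropriate level $n$, and is where the bulk of the work lies.
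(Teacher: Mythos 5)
The paper offers no proof of this statement at all --- it is quoted from Michael's paper \cite{em} --- so there is nothing internal to compare with and your proposal must stand on its own. Your reverse direction (embedding $x\mapsto\{x\}$ and using heredity) and your forward direction in the second countable case are correct and complete; the latter is the standard argument. The problem is the forward direction for first countability, and it is not merely the ``delicate step'' you flag as remaining work: that step cannot be carried out, because the implication ``$X$ first countable $\Rightarrow$ $K(X)$ first countable'' is false for a general (even compact) Hausdorff first countable $X$.

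Here is the concrete obstruction. If $\{\langle V_{k,1},\ldots,V_{k,m_k}\rangle\}_{k}$ is a countable neighbourhood base at $A$ in $K(X)$, then for every open $W\supset A$ some $\langle V_{k,1},\ldots,V_{k,m_k}\rangle$ is contained in $\langle W\rangle$; testing with finite sets of the form $\{y,z_1,\ldots,z_{m_k}\}$, where $z_i\in A\cap V_{k,i}$ and $y$ is an arbitrary point of $\bigcup_i V_{k,i}$, forces $A\subset\bigcup_i V_{k,i}\subset W$. Thus first countability of $K(X)$ at $A$ forces $A$ to have a countable \emph{outer} neighbourhood base in $X$, and a compact subset of a first countable Hausdorff space need not have one. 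Let $X$ be the Alexandroff duplicate of $[0,1]$: the set $[0,1]\times\{0,1\}$ in which each $(x,1)$ is isolated and $(x,0)$ has basic neighbourhoods $(U\times\{0,1\})\setminus\{(x,1)\}$ with $U\ni x$ open in $[0,1]$. This space is compact, Hausdorff and first countable, and $A=[0,1]\times\{0\}$ is closed, hence compact. An open set containing $A$ is precisely one of the sets $X\setminus(E\times\{1\})$ with $E\subset[0,1]$ finite (an infinite $E$ would have a limit point $x_0$, and no basic neighbourhood of $(x_0,0)$ could avoid $E\times\{1\}$), and no countable family of these is cofinal, since that would exhibit $[0,1]$ as a countable union of finite sets. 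So $K(X)$ fails to be first countable at $A$, and in particular no countable family $\{U_n(d):n\in\N,\ d\in D\}$ of the kind you propose can refine all the neighbourhoods $\langle X\setminus(E\times\{1\})\rangle$. The first countable half of the statement therefore requires stronger hypotheses --- it does hold when $X$ is second countable (by the part you proved) or metrizable (via the Hausdorff metric) --- and as stated for an arbitrary first countable Hausdorff space it is not correct, so the attribution to \cite{em} should be checked.
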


\begin{definition}
Let $(A_i)$ be a sequence of subsets of $X$. The $ \limsup A_i :=\{x\in X$ : for each $U$ open in $X$ such that $x\in U,\  U\cap A_i \ne \emptyset$ for infinitely many $ i\}$ .
\end{definition}
For any compact subset $A$ of $X$, we define
$$\w_K (A):= \limsup\ \Phi_K^i(A).$$

\begin{theorem}
Let $X$ be a locally compact Hausdorff space and $A$ is a compact subset of $X$. For a continuous map $\Phi:X\to X$ the following are equivalent:\\
1) For each $a \in A,\ a\in Esc(\Phi) ; $\\
2) For each $a \in A,\ \w (a)=\emptyset ; $\\
3) $\w_K(A)=\emptyset ;$\\
4) $A \in  Esc(\Phi_K).$
\end{theorem}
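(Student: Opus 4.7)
My plan is to prove the implications in the cyclic order $(1)\Rightarrow(2)\Rightarrow(3)\Rightarrow(4)\Rightarrow(1)$.  The biimplication $(1)\Leftrightarrow(2)$ is immediate from Proposition~\ref{eq} applied pointwise to each $a\in A$, since for a continuous self-map we have $Esc(\Phi)=Esc^+(\Phi)$.

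For $(2)\Rightarrow(3)$ I argue contrapositively.  If $x\in\omega_K(A)$, first countability at $x$ together with the definition of $\limsup$ produces sequences $a_k\in A$ and $n_k\to\infty$ with $\Phi^{n_k}(a_k)\to x$.  Compactness of $A$ then gives a subsequence with $a_k\to a\in A$, and local compactness supplies an open $V\ni x$ with $\overline V$ compact, so $\Phi^{n_k}(a_k)\in\overline V$ eventually.  I would then use continuity of each iterate $\Phi^{n_k}$, together with the fact that $a_k$ is eventually in every neighborhood of $a$, to extract a sequence $m_j\to\infty$ with $\Phi^{m_j}(a)\in\overline V$; compactness of $\overline V$ then produces an $\omega$-limit point for $a$, so $\omega(a)\neq\emptyset$, contradicting~$(2)$.

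For $(3)\Rightarrow(4)$ again contrapositively: if some compact $\mathcal{K}\subset K(X)$ contains $\Phi_K^{n_k}(A)$ for $n_k\to\infty$, then first countability of $K(X)$ (inherited from $X$ by the hyperspace theorem quoted above) makes $\mathcal{K}$ sequentially compact, so one may assume $\Phi_K^{n_k}(A)\to B$ in the Vietoris topology for a nonempty $B\in\mathcal{K}$.  For any $x\in B$ and any open $U\ni x$, the basic Vietoris open $\langle X,U\rangle$ is a neighborhood of $B$, so eventually $\Phi^{n_k}(A)\cap U\neq\emptyset$; hence $x\in\omega_K(A)$, contradicting~$(3)$.

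The delicate step is $(4)\Rightarrow(1)$.  Suppose some $a\in A$ fails to escape, so $\Phi^{n_k}(a)\to y$ for some $n_k\to\infty$; then $C:=\{y\}\cup\{\Phi^{n_k}(a):k\geq 1\}$ is compact, hence so is $A\cup C$, and by the hyperspace theorem $K(A\cup C)$ is a compact subset of $K(X)$.  The aim is to exhibit a compact $\mathcal{K}\subset K(X)$ containing $\Phi_K^{n_k}(A)$ along a subsequence, contradicting $A\in Esc(\Phi_K)$.  The main obstacle is that knowing a single point of $A$ returns to $C$ does not by itself control where the orbits of the other points of $A$ sit at times $n_k$; to close the argument one must use compactness of $A$ together with local compactness of $X$, for instance by covering $A$ with finitely many compact neighborhoods, tracking their images under $\Phi^{n_k}$, and enlarging $C$ to a still-compact $C'$ with $\Phi^{n_k}(A)\subset A\cup C'$ along a subsequence.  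Making this propagation from the orbit of one point to the orbit of the whole compact set $A$ rigorous is the crux of the proof.
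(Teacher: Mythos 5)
Your steps $(1)\Leftrightarrow(2)$ and $(3)\Rightarrow(4)$ are sound (the latter, via sequential compactness of a compact subset of the first countable space $K(X)$ and the basic open sets $\langle X,U\rangle$, is correct and does not even need local compactness). The other two implications each contain a genuine gap, and neither gap can be closed, because both implications are false. In $(2)\Rightarrow(3)$ the fatal move is ``use continuity of each iterate $\Phi^{n_k}$ \ldots\ to extract $m_j\to\infty$ with $\Phi^{m_j}(a)\in\overline V$'': continuity of $\Phi^{n_k}$ constrains the images of points near $a$ only for each \emph{fixed} $k$, and the neighbourhood of $a$ it requires may shrink faster than $a_k\to a$; with no equicontinuity, $\Phi^{n_k}(a_k)\in\overline V$ tells you nothing about $\Phi^{n_k}(a)$. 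Concretely, let $X\subset\R^2$ be the closed (hence locally compact Hausdorff, first countable) set $\{(k,1/n): n\ge 1,\ 0\le k\le n-1\}\cup\{(k,0):k\ge 0\}$, and let $\Phi(k,1/n)=(k+1,1/n)$ for $k\le n-2$, $\Phi(n-1,1/n)=(0,0)$, $\Phi(k,0)=(k+1,0)$; this map is continuous. Every point of the compact set $A=\{(0,1/n):n\ge 1\}\cup\{(0,0)\}$ escapes, so $(1)$ and $(2)$ hold, yet $\Phi^{n}\bigl((0,1/n)\bigr)=(0,0)$ for every $n$, so $(0,0)\in\w_K(A)$ and $(3)$ fails. (The paper's own proof of $(1)\Rightarrow(3)$ founders on exactly this uniformity: its claim that $\{n_a\}$ is bounded above is false here, since $n_{(0,1/n)}>n$. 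Moreover $(4)$ holds in this example, because $\Phi^{i}(A)\ni(i,0)$ forces every Vietoris-convergent subsequence to fail, so $(4)\not\Rightarrow(3)$ either.)

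The step $(4)\Rightarrow(1)$, which you rightly single out as the crux and leave unfinished, is likewise irreparable. Take $X=\{0\}\cup\N$ with the discrete topology (a closed subspace of $\R$), $\Phi(0)=0$, $\Phi(n)=n+1$, and $A=\{0,1\}$. Then $\Phi^{i}(A)=\{0,i+1\}$ has no convergent subsequence in $K(X)$: a Vietoris limit $B$ would either equal $\{0\}$, whose neighbourhood $\langle\{0\}\rangle$ contains no $\Phi^{i}(A)$, or contain some $m\ge 1$, whereas $\Phi^{i}(A)\cap\{m\}=\emptyset$ for all large $i$. Hence the $\w$-limit set of the point $A$ in $K(X)$ is empty and $A\in Esc(\Phi_K)$ by Proposition \ref{eq}; but $0\in A$ is a fixed point, so $(1)$ fails. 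The root of the trouble, which also undermines the paper's one-line claim that $(3)\Leftrightarrow(4)$ ``follows from Proposition \ref{eq}'', is that $\w_K(A)=\limsup\Phi^{i}(A)$ is a subset of $X$, whereas the $\w$-limit set of $A$ in the flow on $K(X)$ is a subset of $K(X)$; Proposition \ref{eq} links $Esc(\Phi_K)$ to the latter, and only the one-sided implication ``$\w$-limit set in $K(X)$ nonempty $\Rightarrow$ $\limsup$ nonempty'' survives (this is your $(3)\Rightarrow(4)$, contraposed). So the four conditions are not equivalent: $(3)$ implies the others, $(1)\Leftrightarrow(2)$, and the remaining implications fail; no proof of the theorem as stated can be correct.
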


\begin{proof}
The equivalence of  (1) and (2); and (3) and (4) follows from proposition \ref{eq}. First we shall show (1) implies (3).

Since $X$ is locally compact Hausdorff space, we have for $x \in X$  there is a neighborhood $V$  of $x$ such that $\overline{V}$ is compact. For each $a \in A,\ \exists \  n_a \in \N$ such that $\Phi^i(a) \notin \overline{V}, \ \forall i \ge n_a$.\\
\underline{Claim:} The set $\{n_a\}$ is bounded above.\\
Otherwise for each $n_0>0\ \exists \ a_0 \in A$ such that $\Phi^m(a_0) \in \overline{V}$ for some $m>n_0$. That is, there is a sequence of points $(a_1,a_2,a_3,\ldots)$ in $A$ corresponding to $n_{1}>n_{2}>n_{3}>\ldots$. Since $A$ is compact the sequence $(a_i)$ has a convergent subsequence $(a_{i_j})$ converging to some $a' \in A$. Since for each $m, \ \Phi^m$ is continuous, we have $(\Phi^m(a_{i_j}))$ converges to $\Phi^m(a')$. For $\Phi^m(a')\notin \overline{V}, \ \forall\  m\ge n_{a'}$, there is a $N\in \N$ sufficiently large such that for each $i_j>N$ and $m \ge n_{a'}$ $\Phi^m(a_{i_j}) \notin \overline{V}$. Which is a contradiction. Thus the set $\{n_a\}$ is bounded above. 

Since  $\{n_a\}$ is bounded above there is a $N'$ sufficiently large such that
$$\qquad \Phi^i(a)\notin \overline{V},\ \forall \ a \in A, \ i\ge N',$$
$$or, \ \Phi^i(A) \cap V=\emptyset , \ \forall\  i \ge N',$$
$$or, \ \limsup  \Phi^i(A)=\emptyset .\qquad \quad$$

Clearly (3) implies (2), for if $\w(a)\ne \emptyset$ for some $a\in A$ then $\limsup \Phi^i(A)\ne\emptyset$.
\end{proof}

\section{Semigroup of continuous maps}

A {\it{continuous semigroup}}  is a set of (non-identity) continuous self maps of a topological space which is closed under the composition. A semigroup $G$ is said to be generated by a family $\{g_{\al}\}_{\al}$ of  continuous self maps of a topological space $X$  if every element of $G$ can be expressed as compositions of iterations of the elements of  $\{g_{\al}\}_{\al}$. We denote this by $G=<g_{\al}>_{\al}$.

\begin{definition}
Given a semigroup $G$ and $x \in X$, the set $O_G(x):=\{g(x): g \in G\}$ is called the (forward) {\it orbit} of $x$ under $G$.
\end{definition}

\begin{definition}
A sequence of functions $(f_{n_k})\subset G$ is said to be {\it{unbounded}} if  $n_k\to \ity$ as $k\to \ity$ and each $f_{n_k}$ consists of exactly $n_k$ iterates of  $g_{\al_{0}}$, for fix $g_{\al_0}\in \{g_{\al}\}_{\al}$, that is, $f_{n_k}=h_1\circ g_{\al_0}\circ h_2 \circ g_{\al_0} \circ h_3 \circ \ldots \circ h_{n_k}\circ g_{\al_0} \circ h_{n_k+1}$, where each $h_i \in G\cup \{identity\}$ and $h_i$s are independent of $g_{\al_0}$.
\end{definition}

Note that the unboundedness of a sequence in $G$ is not with respect to some metric on $G$. The term unbounded refers to the unboundedness of the sequence $(n_k)$, the number of iterates of a generator of $G$. Recall that in case of a discrete or continuous dynamical system any unbounded sequence corresponds to the iterates of $\Phi^1$ (that is, $\Phi^t$, when $t=1$, as mentioned in section \ref{sec2}).

\begin{definition}
A point $z\in X$ is called an {\it{$\w-$limit\ point}} for a point $x\in X$ if for some unbounded sequence $(f_{n_k})\subset G, \ f_{n_k}(x)\to z$ as $k \to \ity$.
\end{definition}

\begin{definition}
Let $G=\ <g_i>_{i \in \varLambda}$ be a semigroup of continuous self maps of $X$ and $\ti{G}=\ <\ti{g_i}>_{i \in \varLambda}$ be a semigroup of continuous self maps of $Y$. Two dynamical systems $(X,G)$ and $(Y,\ti{G})$ are said to be topologically conjugate if there exists a homeomorphism $\rho : X \to Y$ such that $\rho\circ g_i=\ti{g_i}\circ \rho$ for each $i \in \varLambda$.
\end{definition}

\begin{proposition} \label{conjugacy}
Let $(X,G)$ and $(Y,\ti{G})$ be two dynamical systems. If $\rho : X \to Y$ is a topological conjugacy then $\w(\rho(x))=\rho(\w(x))$ for every $x \in X$.
\end{proposition}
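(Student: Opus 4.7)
The plan is to prove the two inclusions $\rho(\omega(x)) \subseteq \omega(\rho(x))$ and $\omega(\rho(x)) \subseteq \rho(\omega(x))$ separately, exploiting the symmetry that $\rho^{-1}$ is itself a topological conjugacy from $(Y,\tilde{G})$ to $(X,G)$. Given this symmetry, it suffices to establish one inclusion and apply the same argument in the other direction.

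The key preparatory step is to observe that the conjugacy relation $\rho \circ g_i = \tilde{g}_i \circ \rho$ holding on generators extends to all elements of the semigroup. Concretely, for any composition $f = h_1 \circ g_{i_1} \circ h_2 \circ \cdots \circ h_m \circ g_{i_m} \circ h_{m+1}$ in $G$, let $\tilde{f}$ denote the element of $\tilde{G}$ obtained by replacing each generator $g_i$ by $\tilde{g}_i$. A straightforward induction on the word length gives $\rho \circ f = \tilde{f} \circ \rho$. Crucially, this assignment $f \mapsto \tilde{f}$ preserves the number of occurrences of a fixed generator $g_{\alpha_0}$, since $g_{\alpha_0}$ is replaced by $\tilde{g}_{\alpha_0}$ in a position-preserving way. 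Consequently, if $(f_{n_k}) \subset G$ is unbounded with $n_k$ iterates of $g_{\alpha_0}$, then $(\tilde{f}_{n_k}) \subset \tilde{G}$ is unbounded with $n_k$ iterates of $\tilde{g}_{\alpha_0}$.

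With this in hand, take $z \in \omega(x)$, so there exists an unbounded sequence $(f_{n_k}) \subset G$ with $f_{n_k}(x) \to z$. Applying the continuous map $\rho$ and the conjugacy identity yields
\[
\tilde{f}_{n_k}(\rho(x)) \;=\; \rho(f_{n_k}(x)) \;\longrightarrow\; \rho(z).
\]
Since $(\tilde{f}_{n_k})$ is unbounded in $\tilde{G}$ by the previous paragraph, this witnesses $\rho(z) \in \omega(\rho(x))$, giving $\rho(\omega(x)) \subseteq \omega(\rho(x))$. The reverse inclusion follows by applying the same reasoning to the conjugacy $\rho^{-1} : Y \to X$ between $(Y,\tilde{G})$ and $(X,G)$.

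The main subtlety (rather than obstacle) is purely bookkeeping: one must verify that the correspondence $f \mapsto \tilde{f}$ is well defined and length-preserving, i.e.\ independent of the way a semigroup element is written as a word in the generators, to the extent required so that the unboundedness condition (which depends on the structural presentation of $f_{n_k}$ as a word containing $n_k$ copies of $g_{\alpha_0}$) transfers correctly to $\tilde{G}$. Once this is checked, the rest of the argument is essentially the classical conjugacy-transports-limits calculation, combined with continuity of $\rho$ and $\rho^{-1}$.
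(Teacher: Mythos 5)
Your proposal is correct and follows essentially the same route as the paper: push the unbounded sequence $(f_{n_k})$ through $\rho$ using the conjugacy relation on generators to obtain the unbounded sequence $(\ti{f}_{n_k})$ in $\ti{G}$, pass to the limit by continuity, and get the reverse inclusion from the conjugacy $\rho^{-1}$. Your explicit verification that the correspondence $f\mapsto\ti{f}$ preserves the count of occurrences of $g_{\al_0}$ (so that unboundedness transfers) is a bookkeeping point the paper leaves implicit, but it is the same argument.
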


\begin{proof}
If $x \in X$ and $z\in \w(x)$ then there exists an  unbounded sequence $(f_{n_k})\subset G$ such that  $\displaystyle{\lim_{k\to \ity}}f_{n_k}(x)=z $. Since $\rho$ is a conjugacy
\begin{equation}\begin{split}\notag
\rho(z)&=\rho(\lim f_{n_k}(x)) \\
&=\lim \rho(f_{n_k}(x))\\
&=\lim\rho((h_1\circ g_{\al_0}\circ h_2\circ \ldots \circ h_{n_k}\circ g_{\al_0} \circ h_{n_k+1})(x))\\
&=\lim(\ti{h}_1\circ \ti{g}_{\al_0}\circ \ti{h}_2 \circ \ldots \circ \ti{h}_{n_k}\circ \ti{g}_{\al_0} \circ \ti{h}_{n_k+1})(\rho(x))\\
&=\lim\ti{f}_{n_k}(\rho(x))
\end{split}
\end{equation}

Therefore $\rho(z)\in \w(\rho(x))$ and hence $\rho(\w(x))\subset\w(\rho(x))$.

Since $\rho^{-1}$ is a conjugacy from $Y$ to $X$, by previous arguments we have the reverse inclusion. Thus $\w(\rho(x))=\rho(\w(x))$ for every $x \in X$.
\end{proof}

\begin{definition}
A subspace $Y \subset X$ is said to be (forward) {\it{invariant}} under $G$ if $g(y)\in Y$ for all $g\in G$ and $y\in Y$.
\end{definition}

If $G=\ <g_{\al}>_{\al}$ then for $Y$ to be invariant it is sufficient that $g(y)\in Y$ for all $g\in \{g_{\al}\}$ and $y\in Y$.

\begin{proposition}\label{invariant}
$\w(x)$ is invariant.
\end{proposition}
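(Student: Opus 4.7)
The plan is to show forward invariance directly: given $z \in \w(x)$ and any $g \in G$, I will manufacture an unbounded sequence in $G$ witnessing that $g(z) \in \w(x)$, by pre-composing the sequence that witnesses $z \in \w(x)$ with $g$.

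To begin, I unpack the definition: $z \in \w(x)$ yields an unbounded sequence $(f_{n_k}) \subset G$ such that $f_{n_k}(x) \to z$ as $k \to \ity$. By the definition of unbounded sequence, there is a fixed generator $g_{\al_0}$ such that each $f_{n_k}$ admits the decomposition $f_{n_k} = h_1 \circ g_{\al_0} \circ h_2 \circ g_{\al_0} \circ \cdots \circ h_{n_k} \circ g_{\al_0} \circ h_{n_k+1}$ with each $h_i$ independent of $g_{\al_0}$, and $n_k \to \ity$.

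Next, fix an arbitrary $g \in G$ and let $m \ge 0$ be the number of occurrences of $g_{\al_0}$ in the decomposition of $g$ (where $m = 0$ if $g$ involves no $g_{\al_0}$). Then $g \circ f_{n_k}$ is an element of $G$ whose decomposition contains exactly $n_k + m$ copies of $g_{\al_0}$, the additional $m$ coming from $g$ itself and the original $n_k$ still appearing in the tail. Setting $\tilde{n}_k := n_k + m$, the sequence $(g \circ f_{n_k})$ is an unbounded sequence in $G$ with the same distinguished generator $g_{\al_0}$, since $\tilde{n}_k \to \ity$. Then by continuity of $g$,
\[
(g \circ f_{n_k})(x) = g\bigl(f_{n_k}(x)\bigr) \longrightarrow g(z) \quad \text{as } k \to \ity,
\]
so $g(z) \in \w(x)$. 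Since $g \in G$ was arbitrary, $\w(x)$ is invariant.

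The only potentially delicate step is confirming that $g \circ f_{n_k}$ genuinely qualifies as a term of an unbounded sequence in the precise sense of the text. This is purely a bookkeeping matter: the requirement that the $h_i$'s in the canonical form be independent of $g_{\al_0}$ means that the number of $g_{\al_0}$-factors is well-defined and additive under composition, so pre-composition by $g$ shifts the count by the fixed constant $m$ without disturbing unboundedness. I expect no substantive obstacle beyond this verification.
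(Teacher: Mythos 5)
Your proof is correct and takes essentially the same route as the paper's: post-compose the witnessing unbounded sequence with $g$, use continuity to get $g(f_{n_k}(x))\to g(z)$, and check that the count of $g_{\al_0}$-factors is only shifted by a constant, so the new sequence is still unbounded. The sole difference is that the paper verifies invariance only on the generators (which suffices by its remark following the definition of invariance), so its shift is $0$ or $1$, whereas you handle an arbitrary $g\in G$ with shift $m$.
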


\begin{proof}
If $x \in X$ and $z\in \w(x)$ then there exists an  unbounded sequence $(f_{n_k})\subset G$ such that $f_{n_k}=h_1\circ g_{\al_0}\circ h_2\circ \ldots \circ h_{n_k}\circ g_{\al_0} \circ h_{n_k+1}$, for some $g_{\al_0}$ and $\displaystyle{\lim_{k\to \ity}}f_{n_k}(x)=z $.

For $g\in \{g_{\al}\}_{\al}$, since $g$ is continuous,
\begin{equation}\begin{split}\notag
g(z) &= g(\lim f_{n_k}(x)) \\
&=\lim ((g\circ h_1)\circ g_{\al_0}\circ h_2\circ \ldots \circ h_{n_k}\circ g_{\al_0} \circ h_{n_k+1})(x)\\
&=
\begin{cases}
\lim f_{n_k}\rq{}(x), & g \neq g_{\al_0}\\
\lim f_{n_k+1}\rq{}(x), & g=g_{\al_0}.
\end{cases}
\end{split}
\end{equation}

Since the sequences ($f_{n_k}\rq{}$) and ($ f_{n_k+1}\rq{}$) are also unbounded, we have $g(z)\in \w(x)$. Hence $\w(x)$ is invariant.
\end{proof}

\begin{proposition}
If  $O_G(x)$ is contained in a compact subset of X then $\w(x)$ is nonempty.
\end{proposition}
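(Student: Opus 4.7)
The plan is to produce an explicit unbounded sequence $(f_{n_k}) \subset G$ whose images $f_{n_k}(x)$ cluster inside the compact set containing $O_G(x)$, and to extract an $\omega$-limit point from a convergent subsequence.

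First, I would fix any generator $g_{\alpha_0}$ of $G$ and define $f_{n_k} := g_{\alpha_0}^{\,k}$ for $k \in \N$. With the choice $h_i = \text{identity}$ for all $i$ in the defining expression $f_{n_k} = h_1 \circ g_{\alpha_0} \circ h_2 \circ \cdots \circ h_{n_k} \circ g_{\alpha_0} \circ h_{n_k+1}$, and $n_k = k$, this sequence satisfies the definition of unbounded: it consists of exactly $k$ iterates of $g_{\alpha_0}$ and $n_k = k \to \infty$. Each $f_{n_k}(x)$ lies in $O_G(x)$, hence in the compact subset $K \subset X$ that contains the orbit by hypothesis.

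Next I would invoke the standing assumption (made in Section \ref{sec2} and carried through this section) that $X$ is Hausdorff and first countable. A compact subset of a first countable Hausdorff space is sequentially compact, so the sequence $(f_{n_k}(x))_k \subset K$ admits a convergent subsequence $f_{n_{k_j}}(x) \to z$ with $z \in K \subset X$. The subsequence $(f_{n_{k_j}})$ is still unbounded in the sense of the definition, since $n_{k_j} \to \infty$ as $j \to \infty$ and each $f_{n_{k_j}}$ still consists of exactly $n_{k_j}$ iterates of $g_{\alpha_0}$ (being unchanged as a function). Therefore $z \in \omega(x)$, which shows $\omega(x) \neq \emptyset$.

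No serious obstacle is expected: the argument is a direct application of sequential compactness. The only mild subtlety to highlight in writing is to verify explicitly that restricting to a subsequence preserves the unboundedness condition in the sense of the paper's definition (which is immediate, since the iterate count $n_{k_j}$ still tends to infinity), and to note why the pure-iterate choice $f_{n_k} = g_{\alpha_0}^{\,k}$ does qualify as an admissible unbounded sequence under the given definition.
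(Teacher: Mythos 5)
Your proof is correct and follows essentially the same route as the paper: push an unbounded sequence through $x$, observe the images stay in the compact set containing $O_G(x)$, and extract a convergent subsequence (using that compact subsets of a first countable Hausdorff space are sequentially compact) whose limit is an $\w$-limit point. You are in fact slightly more careful than the paper, which takes an arbitrary unbounded sequence without exhibiting one and does not remark that passing to a subsequence preserves unboundedness.
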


\begin{proof}
Let $(f_{n_k})$ be an unbounded sequence in the semigroup $G$. Then the sequence $(f_{n_k}(x))$ lies in a compact set containing $O_G(x)$. Hence there is a convergent subsequence whose limit is an $\w-$limit point of $x$. Thus $\w(x)$ is not empty.
\end{proof}

In the classical case of a continuous (or discrete) dynamical system, we see that $\w(x)$ is closed (provided $X$ is first countable) \cite{Alongi}. In the general case of continuous semigroup the closedness of $\w(x)$ is not clear. We shall  address this problem partially in this paper, where we establish the closedness of $\w(x)$ in case of a finitely generated semigroup.

\begin{proposition}
Let $G$ be a finitely generated semigroup and $x \in X$  then  $\w(x)$ is closed.
\end{proposition}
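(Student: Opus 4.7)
The plan is to exploit first countability of $X$ together with the pigeonhole principle applied to the finite generating set $\{g_1,\ldots,g_m\}$ of $G$. Let $z$ be a limit point of $\w(x)$; since $X$ is first countable (as assumed in Section~\ref{sec2}) it suffices to show that whenever $(z_j)\subset \w(x)$ satisfies $z_j\to z$ we have $z\in\w(x)$.

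For each $j$, because $z_j\in\w(x)$, there is an unbounded sequence $(f_{n_k}^{(j)})_k\subset G$ with $f_{n_k}^{(j)}(x)\to z_j$ as $k\to\ity$. By definition of ``unbounded,'' this data comes with a distinguished generator $g_{\al_0^{(j)}}\in\{g_1,\ldots,g_m\}$ that appears $n_k$ times in $f_{n_k}^{(j)}$. Since there are only finitely many generators, the pigeonhole principle yields a fixed generator $g_{\al_0}$ and an infinite subset $J\subset \N$ such that $g_{\al_0^{(j)}}=g_{\al_0}$ for all $j\in J$. Passing to the subsequence $(z_j)_{j\in J}$ we may therefore assume a single generator $g_{\al_0}$ serves as the witness for every $z_j$.

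Next I would build an unbounded sequence converging to $z$ by a standard diagonal argument. Fix a nested neighborhood base $U_1\supset U_2\supset\cdots$ at $z$. For each $i$, choose $j_i$ so that $z_{j_i}\in U_i$, and then use $f_{n_k}^{(j_i)}(x)\to z_{j_i}$ to pick $k_i$ with $f_{n_{k_i}}^{(j_i)}(x)\in U_i$ and with $n_{k_i}\ge i$ (the latter is possible because $n_k\to\ity$ for each fixed $j_i$). Setting $F_i := f_{n_{k_i}}^{(j_i)}$, each $F_i$ is a composition in $G$ containing at least $i$ iterates of the \emph{same} generator $g_{\al_0}$, so $(F_i)$ is unbounded in the sense of the definition, and $F_i(x)\to z$. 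Hence $z\in\w(x)$, and $\w(x)$ is closed.

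The main obstacle is the subtlety in the definition of an unbounded sequence: it requires that a \emph{single} fixed generator be iterated $n_k$ times in the $k$-th term, not merely that the word length tends to infinity. For a semigroup with infinitely many generators, different points $z_j\in\w(x)$ might be detected only by sequences whose distinguished generator varies with $j$, and then no diagonal extraction can give a sequence unbounded with respect to one fixed generator. Finite generation is precisely what lets pigeonhole circumvent this; everything else is just first-countability packaged as a diagonal argument.
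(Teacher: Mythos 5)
Your proof is correct and follows essentially the same route as the paper's: pigeonhole on the finite generating set to fix a single distinguished generator $g_{\al_0}$, followed by a diagonal extraction to produce one unbounded sequence converging to the limit point. If anything, your version is more careful than the paper's, since you explicitly use the nested neighborhood base at $z$ and arrange $n_{k_i}\ge i$ so that the diagonal sequence is genuinely unbounded and genuinely converges.
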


\begin{proof}
If $y \in \overline{\w(x)}$ and $G=\ <g_i>_{i=1}^m$, there exists a sequence $(z_n^i)_n\subset \w(x)$ such that $z_n^i \to y$, as $n \to \ity$. Where each $z_n^i=\displaystyle{\lim_{k\to \ity}}f_{n_k}^i (x)$ and $(f_{n_k}^i)\subset G$ is an unbounded sequence corresponding to fix $g_i$ for some $1 \le i \le m$.

Since $G$ is finitely generated, for some $1 \le i_0 \le m$, there is a subsequence $(z_{n_j}^{i_0})\subset (z_n^i)$ and $z_{n_j}^{i_0} \to y$, where each $z_{n_j}^{i_0}=\displaystyle{\lim_{k\to \ity}}f_{n_{j_k}}^{i_0} (x)$. Consider the diagonal subsequence $f_{m_k}:=f_{n_{k_k}}^{i_0}$. Then the sequence $(f_{m_k})$ is unbounded and $\displaystyle{\lim_{k\to \ity}f_{m_k}(x)=y}$. Thus $y \in \w(x)$.
\end{proof}

\begin{proposition}
Let $G$ be a finitely generated semigroup and $M$ is a nonempty compact subset of $X$. $M$ is minimal with respect to $G$ iff $M=\w(x)$ for every $x\in M$.
\end{proposition}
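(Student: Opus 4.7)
My plan is to split the proof into the two implications and, in each case, simply verify the three defining properties of a minimal set for an appropriate candidate, calling on the earlier propositions of this section.

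For the forward implication, suppose $M$ is minimal and fix an arbitrary $x \in M$. I will show $\w(x) = M$ by exhibiting $\w(x)$ as a nonempty closed invariant subset of $M$ and invoking minimality. Since $M$ is forward invariant and $x \in M$, the orbit $O_G(x)$ lies in $M$; since $M$ is compact, the previous ``$O_G(x)$ in a compact set'' proposition forces $\w(x) \neq \emptyset$. Any $\w$-limit point of $x$ is a limit of points of $O_G(x) \subset M$, and $M$ is closed (being compact and Hausdorff), so $\w(x) \subset M$. Invariance of $\w(x)$ is Proposition \ref{invariant}, and closedness of $\w(x)$ is exactly the preceding proposition, which uses finite generation of $G$ in an essential way. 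Thus $\w(x)$ is a nonempty closed invariant subset of $M$, so by minimality $\w(x) = M$.

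For the converse, assume $M = \w(x)$ for every $x \in M$. I need to verify that $M$ is closed, nonempty, invariant, and admits no proper subset with these three properties. Closedness is automatic ($M$ is compact in a Hausdorff space), nonemptiness is given, and invariance follows because $M = \w(x)$ for any particular $x \in M$ and Proposition \ref{invariant} tells us $\w(x)$ is invariant. Now let $N \subset M$ be any nonempty closed invariant subset, and pick $y \in N$. Invariance of $N$ gives $O_G(y) \subset N$, and then closedness of $N$ forces every limit of an unbounded sequence $(f_{n_k}(y))$ to lie in $N$; that is, $\w(y) \subset N$. But $y \in M$, so by hypothesis $\w(y) = M$, whence $M \subset N$ and $N = M$. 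Thus $M$ is minimal.

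I do not foresee a serious obstacle: both directions are essentially bookkeeping with the invariance, nonemptiness, and closedness propositions already in hand. The one subtlety worth flagging is that the finite-generation assumption enters only through the closedness of $\w(x)$ in the forward direction; everything else in the argument works for an arbitrary continuous semigroup. A minor care point is that ``invariant'' here is forward invariance, so I must explicitly use that $f_{n_k}(y)$ stays in $N$ for every $k$ (not merely eventually) when passing to the limit in the argument that $\w(y) \subset N$.
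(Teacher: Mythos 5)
Your argument is correct and follows essentially the same route as the paper's proof: in the forward direction you exhibit $\w(x)$ as a nonempty closed invariant subset of the minimal set $M$ (using compactness of $M$ for nonemptiness and finite generation for closedness), and in the converse you show any nonempty closed invariant $N\subset M$ contains $\w(y)=M$ for $y\in N$. Your write-up simply makes explicit the bookkeeping (why $\w(x)\subset M$ and why $\w(y)\subset N$) that the paper leaves implicit.
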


\begin{proof}
Let $x \in M$. Since $G$ is finitely generated and $M$ is compact and invariant being minimal, $\w(x)$ is nonempty closed and invariant subset of $M$. Since $M$ is minimal, $M=\w(x)$.

Conversely, let $M=\w(x)$ for every $x\in M$. If $N$ is a nonempty closed invariant subset of $M$ and $y\in N$, then $M=\w(y) \subset N$. Thus $M$ is a minimal subset of $(X,G)$.
\end{proof}

\begin{definition}
Let $(X,G)$  be a topological dynamical system. A point $x\in X$ is called a {\it{recurrent point}} with respect to $G$ if  $x\in \w(x)$.
\end{definition}

The set of all recurrent points of $(X,G)$ is denoted by $Rec(G)$.

\begin{theorem}
If $G$ is abelian then $Rec(G)$ is invariant.
\end{theorem}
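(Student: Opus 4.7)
The plan is direct: unwind the definitions and exploit commutativity. Assume $x \in Rec(G)$, so by definition $x \in \w(x)$, meaning there exists an unbounded sequence $(f_{n_k}) \subset G$ with $f_{n_k}(x) \to x$ as $k \to \ity$. To show $Rec(G)$ is invariant, the remark after the definition of invariant subspace reduces us to verifying $g_\al(x) \in Rec(G)$ for every generator $g_\al \in \{g_\al\}_\al$.

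The key computation has two ingredients. First, since $g_\al$ is continuous, applying it to the convergent sequence gives $g_\al(f_{n_k}(x)) \to g_\al(x)$. Second, because $G$ is abelian, every element of $G$ commutes with $g_\al$, so $g_\al \circ f_{n_k} = f_{n_k} \circ g_\al$ as functions; hence $f_{n_k}(g_\al(x)) \to g_\al(x)$. Crucially, the very same sequence $(f_{n_k})$ is reused here, and its unboundedness is a property of the sequence of functions itself (it counts iterates of the fixed generator $g_{\al_0}$) that is independent of the point at which the functions are evaluated. Thus $g_\al(x)$ is an $\w$-limit point of itself, so $g_\al(x) \in Rec(G)$.

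The only conceptual point to flag is the use of abelianness to commute $g_\al$ through the long composition defining $f_{n_k} = h_1 \circ g_{\al_0} \circ h_2 \circ \ldots \circ h_{n_k} \circ g_{\al_0} \circ h_{n_k+1}$; this is immediate from the hypothesis since $f_{n_k}, g_\al \in G$, but one can alternatively view it as an iterated application of pairwise commutation of generators. Beyond this, there is no real obstacle: the proof is essentially a one-line calculation once the correct sequence is identified.
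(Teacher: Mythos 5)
Your proof is correct and follows essentially the same route as the paper's: take the unbounded sequence $(f_{n_k})$ witnessing $x \in \w(x)$, use continuity of the generator $g$ and abelianness to get $f_{n_k}(g(x)) = g(f_{n_k}(x)) \to g(x)$, and conclude $g(x) \in Rec(G)$. Your explicit remark that unboundedness of $(f_{n_k})$ is independent of the evaluation point is a welcome clarification that the paper leaves implicit.
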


\begin{proof}
Let $G$ be an abelian semigroup and $x\in Rec(G)$. Since $x$ is a recurrent point, $x\in \w(x)$. There exists an unbounded sequence $(f_{n_k})\subset G$ such that $x=\displaystyle{\lim_{k \to \ity}}f_{n_k}(x)$. Let $g$ be any generator of $G$. Since $g$ is continuous and $G$ is abelian, we have,
\begin{equation}\begin{split}\notag
g(x) &= g(\lim f_{n_k}(x)) \\
&=\lim g\circ f_{n_k}(x)\\
&=\lim f_{n_k}(g(x)).
\end{split}
\end{equation}

Thus $g(x)\in Rec(G)$. Hence $Rec(G)$ is invariant.
\end{proof}

\begin{theorem}
Let $(X,G)$ and $(Y,\ti{G})$ be two dynamical systems. If $\rho : X \to Y$ is a topological conjugacy then $\rho(Rec(G))=Rec(\ti{G})$.
\end{theorem}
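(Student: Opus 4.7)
The plan is to deduce this directly from Proposition \ref{conjugacy}, which already tells us that $\omega(\rho(x)) = \rho(\omega(x))$ for every $x \in X$ whenever $\rho$ is a topological conjugacy. Since recurrence is defined by the single condition $x \in \omega(x)$, once we translate that membership through $\rho$, everything should fall out immediately.

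First I would prove the forward inclusion $\rho(Rec(G)) \subset Rec(\tilde{G})$. Take $x \in Rec(G)$, so by definition $x \in \omega(x)$. Applying $\rho$ gives $\rho(x) \in \rho(\omega(x))$, and invoking Proposition \ref{conjugacy} this equals $\omega(\rho(x))$. Hence $\rho(x) \in \omega(\rho(x))$, meaning $\rho(x) \in Rec(\tilde{G})$.

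For the reverse inclusion, I would observe that $\rho^{-1} : Y \to X$ is again a topological conjugacy; indeed $\rho \circ g_i = \tilde{g}_i \circ \rho$ gives $\rho^{-1} \circ \tilde{g}_i = g_i \circ \rho^{-1}$ for every generator. Applying the argument of the previous paragraph with $\rho^{-1}$ in place of $\rho$ yields $\rho^{-1}(Rec(\tilde{G})) \subset Rec(G)$, and pushing this forward by $\rho$ gives $Rec(\tilde{G}) \subset \rho(Rec(G))$. Combining both inclusions finishes the proof.

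There is no real obstacle here: the entire content is packaged into Proposition \ref{conjugacy}. The only point that needs a brief mention is the symmetry step — verifying that the inverse of a conjugacy between semigroup actions is itself a conjugacy — which is a trivial rearrangement of the intertwining relation on generators.
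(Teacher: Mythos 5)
Your proposal is correct and follows essentially the same route as the paper: the forward inclusion via Proposition \ref{conjugacy} applied to $x \in \w(x)$, and the reverse inclusion by noting that $\rho^{-1}$ is itself a conjugacy and repeating the argument. Your brief justification that the inverse of a conjugacy is a conjugacy is a welcome (if minor) addition to what the paper states.
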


\begin{proof}
Let $x\in Rec(G)$, that is, $x \in \w(x)$. By proposition \ref{conjugacy}, $\rho(x)\in \rho(\w(x))=\w(\rho(x))$. Thus $\rho(x) \in Rec(\ti{G})$.

Conversely, let $y \in Rec(\ti{G})$. Since $\rho^{-1}$ is a conjugacy from $Y$ to $X$, we have, $y \in \rho(Rec(G))$. Thus, $\rho(Rec(G))=Rec(\ti{G})$.
\end{proof}

\begin{definition}
A point $x\in X$ is called an {\it{escaping point}} if for given any compact set $K \subset X$ every unbounded sequence $(f_{n_k})\subset G$  leaves $K$ at $x$, that is, there exists $N(f_{n_k})\in \N$ such that $f_{n_j}(x) \notin K$ for each $n_j > N(f_{n_k})$.
\end{definition}

The set of all escaping points of $(X,G)$ is denoted by $Esc(G)$. Also as in the proof of proposition \ref{invariant}, with similar arguments, one can observe that $Esc(G)$ is invariant.

\begin{theorem}
$Esc(G)$ is invariant.
\end{theorem}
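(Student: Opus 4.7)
The plan is to mirror the proof of Proposition \ref{invariant} (where $\w(x)$ was shown to be invariant), but with the roles of pre- and post-composition swapped: there one post-composed a single continuous $g$ onto the witnessing limit sequence, whereas here I will post-compose $g$ onto an arbitrary unbounded sequence and apply the escaping-point definition to the new sequence.

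First I would reduce the statement to checking invariance under each generator separately: by the remark following the definition of invariance, it suffices to verify that $g(x) \in Esc(G)$ whenever $x \in Esc(G)$ and $g \in \{g_{\al}\}_{\al}$. Fix such $x$ and $g$, and fix an arbitrary compact set $K \subset X$ together with an arbitrary unbounded sequence $(f_{n_k}) \subset G$ associated to some fixed generator $g_{\al_0}$. The goal is to produce $N \in \N$ with $f_{n_k}(g(x)) \notin K$ for all $n_k > N$.

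The core step is to form $F_{n_k} := f_{n_k} \circ g \in G$ and observe that $(F_{n_k})$ is again an unbounded sequence. Writing $f_{n_k} = h_1 \circ g_{\al_0} \circ h_2 \circ \ldots \circ h_{n_k} \circ g_{\al_0} \circ h_{n_k+1}$, we have $F_{n_k} = h_1 \circ g_{\al_0} \circ h_2 \circ \ldots \circ h_{n_k} \circ g_{\al_0} \circ (h_{n_k+1} \circ g)$. If $g \ne g_{\al_0}$, then the rightmost block $h_{n_k+1} \circ g$ is still independent of $g_{\al_0}$, so $F_{n_k}$ contains exactly $n_k$ iterates of $g_{\al_0}$ and the sequence is unbounded with the same indexing. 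If $g = g_{\al_0}$, then $F_{n_k}$ contains exactly $n_k + 1$ iterates, and reindexing by $m_k := n_k + 1$ exhibits $(F_{n_k})$ as an unbounded sequence associated to $g_{\al_0}$.

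Once the unboundedness of $(F_{n_k})$ is in hand, the definition of $Esc(G)$ applied to $x$ supplies some $N$ with $F_{n_k}(x) \notin K$ whenever $n_k > N$; since $F_{n_k}(x) = f_{n_k}(g(x))$, this proves $g(x) \in Esc(G)$, and invariance follows. The only delicate point, which I flag as the main obstacle, is the bookkeeping in the case $g = g_{\al_0}$: one must verify that shifting the count of $g_{\al_0}$-iterates by one does not violate the technical definition of unboundedness, and this is resolved by the reindexing above.
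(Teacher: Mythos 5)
Your proposal is correct and follows essentially the same route as the paper's own proof: both arguments take an arbitrary unbounded sequence $(f_{n_k})$, post-compose with the generator $g$ to get a new unbounded sequence (with the index shifted by one when $g=g_{\al_0}$), and then apply the escaping property of $x$ to that new sequence to conclude that $(f_{n_k})$ leaves every compact set at $g(x)$. The reindexing issue you flag in the case $g=g_{\al_0}$ is handled identically in the paper, which simply writes the composed sequence as $(f_{n_k+1}')$.
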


\begin{proof}
Let $x \in Esc(G)$ and $g \in \{g_{\al}\}_{\al}$. Since for given any compact set $K \subset X$ every unbounded sequence $(f_{n_k})\subset G$  leaves $K$ at $x$. Let $(f_{n_k})\subset G$  be an unbounded sequence such that $f_{n_k}=h_1\circ g_{\al_0}\circ h_2\circ \ldots \circ h_{n_k}\circ g_{\al_0} \circ h_{n_k+1}$, for fix $g_{\al_0}$. Then
\begin{equation}\notag
f_{n_k}\circ g=
\begin{cases}
f_{n_k}\rq{}, & g \neq g_{\al_0}\\
f_{n_k+1}\rq{}, & g=g_{\al_0}.
\end{cases}
\end{equation}

Since the sequences ($f_{n_k}\rq{}$) and ($ f_{n_k+1}\rq{}$) are also unbounded and $x$ is an escaping point, the sequence $(f_{n_k}\circ g)$ leaves $K$ at $x$. Hence  the sequence $(f_{n_k})$ leaves $K$ at $g(x)$. Thus $g(x) \in Esc(G)$.
\end{proof}

\begin{theorem}
Let $(X,G)$ and $(Y,\ti{G})$ be two dynamical systems. If $\rho : X \to Y$ is a topological conjugacy then $\rho(Esc(G))=Esc(\ti{G})$.
\end{theorem}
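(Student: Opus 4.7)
The plan is to establish both inclusions, with the key observation being that the conjugacy relation $\rho\circ g_i=\ti{g_i}\circ \rho$ extends, by induction on the length of composition, to \emph{any} finite composition of generators; this yields a natural bijection between unbounded sequences in $G$ and unbounded sequences in $\ti{G}$ that is compatible with $\rho$.

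For the inclusion $\rho(Esc(G))\subset Esc(\ti{G})$, I would fix $x\in Esc(G)$ and an arbitrary compact $K\subset Y$, and verify that every unbounded sequence in $\ti{G}$ leaves $K$ at $\rho(x)$. Since $\rho$ is a homeomorphism, $\rho^{-1}(K)$ is compact in $X$. Given any unbounded sequence
$$\ti{f}_{n_k}=\ti{h}_1\circ \ti{g}_{\al_0}\circ \ti{h}_2\circ \cdots \circ \ti{h}_{n_k}\circ \ti{g}_{\al_0}\circ \ti{h}_{n_k+1}\subset \ti{G},$$
define the corresponding sequence $(f_{n_k})\subset G$ by replacing each $\ti{g}_i$ with $g_i$. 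Then $(f_{n_k})$ is unbounded (with the same count $n_k$ of occurrences of $g_{\al_0}$), and the extended conjugacy relation gives $\ti{f}_{n_k}\circ \rho=\rho\circ f_{n_k}$. Because $x\in Esc(G)$, there exists $N$ with $f_{n_j}(x)\notin \rho^{-1}(K)$ for all $n_j>N$; applying $\rho$ yields $\ti{f}_{n_j}(\rho(x))=\rho(f_{n_j}(x))\notin K$ for all $n_j>N$, so $(\ti{f}_{n_k})$ leaves $K$ at $\rho(x)$. Hence $\rho(x)\in Esc(\ti{G})$.

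For the reverse inclusion, I would observe that $\rho^{-1}:Y\to X$ is itself a topological conjugacy from $(Y,\ti{G})$ to $(X,G)$, so the same argument applied to $\rho^{-1}$ gives $\rho^{-1}(Esc(\ti{G}))\subset Esc(G)$, which rearranges to $Esc(\ti{G})\subset \rho(Esc(G))$.

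I do not foresee a serious obstacle. The only point that requires care is justifying that the map $(f_{n_k})\mapsto (\ti{f}_{n_k})$ really is a well-defined bijection between unbounded sequences, which amounts to checking that $\rho\circ(h_1\circ g_{\al_0}\circ h_2\circ \cdots)=(\ti{h}_1\circ \ti{g}_{\al_0}\circ \ti{h}_2\circ \cdots)\circ \rho$ by a straightforward induction on the number of generators composed, using $\rho\circ g_i=\ti{g}_i\circ \rho$ at each step. Everything else—compactness of $\rho^{-1}(K)$ and the translation of escape times—is formal.
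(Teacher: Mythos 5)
Your proposal is correct and follows essentially the same route as the paper: both arguments pull the compact set back through the homeomorphism, use the relation $\ti{f}_{n_k}\circ\rho=\rho\circ f_{n_k}$ obtained by extending the conjugacy to compositions of generators, and obtain the reverse inclusion by applying the argument to $\rho^{-1}$. Your version is somewhat more careful in spelling out the induction behind the extended conjugacy relation and the correspondence of unbounded sequences, but the substance is identical.
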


\begin{proof}
If $x \in Esc(G)$ and $\rho(K)$ is any compact subset of $Y$, for any compact $K \subset X$. Let $\ti{f}_{n_k}\subset \ti{G}$ be an unbounded sequence given by $\ti{f}_{n_k}=\ti{h}_1\circ \ti{g}_{\al_0}\circ \ti{h}_2 \circ \ldots \circ \ti{h}_{n_k}\circ \ti{g}_{\al_0} \circ \ti{h}_{n_k+1}$, for some $\ti{g}_{\al_0}$. Since $x \in Esc(G)$ and $\rho : X \to Y$ is a topological conjugacy, we have, $\ti{f}_{n_k}(\rho(x))=\ti{h}_1\circ \ti{g}_{\al_0}\circ \ti{h}_2 \circ \ldots \circ \ti{h}_{n_k}\circ \ti{g}_{\al_0} \circ \ti{h}_{n_k+1}(\rho(x))=\rho\circ h_1\circ g_{\al_0}\circ h_2\circ \ldots \circ h_{n_k}\circ g_{\al_0} \circ h_{n_k+1}(x)$ leaves $\rho(K)$. Therefore, $\rho(Esc(G))\subset Esc(\ti{G})$.

Since $\rho^{-1}$ is a conjugacy from $Y$ to $X$, by previous arguments we have the reverse inclusion. Thus $\rho(Esc(G))=Esc(\ti{G})$.
\end{proof}

{\bf Acknowledgments.}
I am thankful to my thesis adviser Sanjay Kumar for fruitful discussions. I am also thankful to the Department of Mathematics, Deen Dayal Upadhyaya College, University of Delhi for providing the research facilities.

\end{document}